\theoremstyle{definition}
\newtheorem{definition}{Definition}[section]
\newtheorem{example}[definition]{Example}
\theoremstyle{plain}
\newtheorem{theorem}[definition]{Theorem}
\newtheorem{proposition}[definition]{Proposition}
\newtheorem{fact}[definition]{Fact}
\newcommand{\bN}{\mathbb N}
\newcommand{\cM}{\mathcal M}
\newcommand{\ov}[1]{\overline{#1}}
\def \M {{\mathcal M}}
\newcommand{\N}{\mathbb{N}}
\def \sup {{\rm sup}}
\def \< {{\langle}}
\def \> {{\rangle}}
\DeclareMathOperator{\Int}{int}
\DeclareMathOperator{\cl}{cl}
\begin{document}

\title[Open cell property in weakly o-minimal structures]{Open cell property in weakly o-minimal structures}
\author[T. Kawakami]{Tomohiro Kawakami}
\address{Department of Mathematics,
	Wakayama University,
	Wakayama, 640-8510, Japan}
\email{kawa0726@gmail.com}

\author[H. Tanaka]{Hiroshi Tanaka}
\address{Faculty of Engineering, Kindai University,
Hiroshima, 739-2116, Japan}
\email{htanaka@hiro.kindai.ac.jp}

\begin{abstract}
	Every bounded definable open set is a union of finitely many
	open strong cells in a weakly o-minimal expansion of a real
	closed field. We prove this fact and another theorem similar
	to it.	
\end{abstract}

\subjclass[2020]{Primary 03C64}

\keywords{Weakly o-minimal structures, open strong cells and open cell property.
The first author was partially supported by JSPS
KAKENHI Grant Number 23K03095.
}

\maketitle

\section{Introduction}

Throughout this paper, 
``definable'' means ``definable possibly with parameters''
and 
we assume that 
a structure $\mathcal M = (M, <, \dots)$ is 
a dense linear order without endpoints. 

A subset $A$ of $M$ is said to be convex
if $a, b \in A$ and $c \in M$ with $a < c < b$ then $c \in A$.
Moreover if 
$A = \emptyset$ or $\inf A, \sup A \in M \cup \{-\infty, +\infty\}$,
then $A$ is called an interval in $M$.
We say that $\mathcal M$ is o-minimal (weakly o-minimal)
if every definable subset of $M$ is 
a finite union of intervals (convex sets), respectively. 
Weakly o-minimality was introduced by Dickmann in \cite{Di}.

Wilkie  (\cite{Wi}) proved that in o-minimal expansions of a real closed field
every bounded definable open set is a finite union of open cells.
Andrews (\cite{A}) proved that if an o-minimal structure admits
CE-cell decomposition then any definable open set is expressed as
a finite union of definable open cells.
Edmundo et al (\cite{EEP}) proved in semi-bounded o-minimal expansion of an ordered group,
any definable open set is covered by a finite union of open cells.
The relation between strong cell decomposition in weakly o-minimal structures and 
the open cell property in o-minimal structures is proved by \cite{ET}.
We consider a generalization of the above results in weakly o-minimal structures.

Readers are expected to be familiar with 
fundamental results of o-minimality and weak o-minimality; 
see, for example, \cite{C},  \cite{D}, \cite{MMS}, \cite{W1}.

\section{Weakly o-minimal expansions of real closed fields}


Let $\mathcal M = (M, <, \ldots)$ be a weakly o-minimal expansion of a dense linear order without endpoints.
For any subsets $C, D$ of $M$,
we write $C < D$ if $c < d$ whenever $c \in C$ and $d \in D$.
A pair $\langle C, D \rangle$ of non-empty subsets of $M$ is called
a cut in $\mathcal M$
if $C < D, C \cup D = M$ and $D$ has no lowest element.
A cut $\langle C, D \rangle$ is said to be definable in $\cM$
if the sets $C$ and $D$ are definable in $\cM$.
The set of all cuts definable in $\cM$ will be denoted by $\ov{M}$.
Note that we have $M = \ov{M}$ if $\cM$ is o-minimal.
We define a linear ordering on $\overline{M}$ by 
$\langle C_1, D_1 \rangle < \langle C_2, D_2 \rangle$ 
if and only if $C_1 \subsetneq C_2$.
Then we may treat $(M, <)$ as a substructure of $(\overline{M}, <)$
by identifying an element $a \in M$ with 
the definable cut $\langle (-\infty, a], (a, +\infty) \rangle$.
Moreover, if $\mathcal M = (M, <, +, \ldots)$ is an expansion of an ordered group $(M, <, +)$,
then the cut $\langle C, D \rangle$ is said to be valuational 
if there exists $\varepsilon > 0$ such that for all $x\in C$ and $y \in D$, we have $y-x > \varepsilon$.
Otherwise the cut $\langle C, D \rangle$ is said to be non-valuational.
The structure $\cM$ is said to be non-valuational if
all definable cuts in $\cM$ are non-valuational.

We equip $M$ ($\ov{M}$) with the interval topology
(the open intervals form a base),
and each product $M^n$ ($(\ov{M})^n$) with 
the corresponding product topology, respectively.

Recall the notion of definable functions from \cite{W2}.
Let $n$ be a positive integer and $A \subseteq M^n$ definable.
A function $f : A \to \ov{M}$ is said to be definable
if the set 
$\{ \langle x, y \rangle \in M^{n+1} : 
x \in A, y < f(x) \}$ is definable.
A function $f : A \to \ov{M} \cup \{-\infty, +\infty\}$ 
is said to be definable
if $f$ is a definable function from $A$ to $\ov{M}$,
$f(x) = -\infty$ for all $x \in A$, or $f(x) = +\infty$ 
for all $x \in A$.

We recall the notion of strong cells from \cite{W1}.

\begin{definition}
For every $m \in \bN_{+}$,
we define, by induction,  strong cells $C \subseteq M^m$ and their completions $\ov{C} \subseteq \ov{M}^m$.

\begin{enumerate}
\item
Any singleton of $M$ is a $0$-strong cell in $M$ and is equal to its completion.

\item
Any non-empty open convex definable subset of $M$ is a 1-strong cell in $M$.
If $C \subseteq M$ is a 1-strong cell,
then we define its completion by $\ov{C} := \{ x \in \ov{M}  : (\exists a, b \in C)(a < x < b)\}$.

Let $m \in \bN_{+}$, $k \leq m$ and suppose that we have already defined $k$-strong cells in $M^m$ and 
their completions in $\ov{M}^m$.

\item
If $C \subseteq M^m$ is a $k$-strong cell and $f : C \to M$ is a continuous definable function 
which has a unique continuous extension $\ov{f} : \ov{C} \to \ov{M}$,
then the graph of $f$ which is denoted by $\Gamma(f)_C$, is a $k$-strong cell in $M^{m+1}$ and 
its completion in $\ov{M}^{m+1}$ is defined as $\Gamma(\ov{f})_{\ov{C}}$.

\item
If $C \subseteq M^m$ is a $k$-strong cell and $f,g : C \to \ov{M} \cup \{ \pm \infty \}$ is continuous definable functions
such that $f, g$ have continuous extensions $\ov{f}, \ov{g} : \ov{C} \to \ov{M} \cup \{ \pm \infty \}$, where $\ov{f}(\ov{a}) < \ov{g}(\ov{a})$ for any $\ov{a} \in \ov{C}$,
then the set $(f, g)_C := \{ \langle \ov{a}, b \rangle \in C \times M : f(\ov{a}) <b < g(\ov{a}) \}$ is called a $(k+1)$-strong cell in $M^{m+1}$.
The completion of $(f, g)_C$ in $\ov{M}^{m+1}$ is defined as  $(\ov{f}, \ov{g})_{\ov{C}} := \{ \langle \ov{a}, b \rangle \in \ov{C} \times \ov{M} : \ov{f}(\ov{a}) <b < \ov{g}(\ov{a}) \}$.

\item 
Every $k$-strong cell is of the form given in (1) through (4).
We say that $C \subseteq M^m$ is a strong cell in $M^m$ 
if there exists a non-negative integer $k$ such that $C$ is a $k$-strong cell in $M^m$.
\end{enumerate}

\end{definition}

A function $f : C \to \ov{M}$ is said to be a cell-defining function if
$\Gamma(f)_C$ is a strong cell or there is a definable function $g : C \to \ov{M}$
so that $(f, g)_C$ or $(g, h)_C$ is a strong cell.

A strong cell $C \subseteq M^m$, $m\geq 2$, is called a refined strong cell if 
each of the cell-defining functions appearing in its definition 
assumes all its values in one of the following sets: 
$M$, $\ov{M} \setminus M$,  $\{-\infty \}$, or $\{ +\infty\}$.
Refined strong cells in $M$ coincide with strong cells in $M$.
Note that the definitions of strong cells in \cite{W1} and \cite{W2} are not identical.
A strong cell in \cite{W2} is called a refined strong cell in \cite{W1}.
In this paper, we follow the terminology of \cite{W1}.

Let $C \subseteq M^m$ be a strong cell and $f : C \to \ov{M}$ definable.
The function $f$ is said to be strongly continuous 
if it has a unique continuous extension $\ov{f} : \ov{C} \to \ov{M}$.

\begin{definition}
Any finite partition of $M$ into singletons and convex open sets definable in $\cM$ is called a strong cell decomposition of $M$.
A finite partition $\mathcal C$ of $M^{m+1}$ into strong cells is said to be a strong cell decomposition of $M^{m+1}$
if $\pi[\mathcal C] := \{\pi[C] : C \in \mathcal C \}$ is a strong cell decomposition of $M^m$.
Here $\pi : M^{m+1} \to M^m$ denotes the projection dropping the last coordinate.
A strong cell decomposition $\mathcal C$ of $M^m$ partitions $X \subseteq M^m$
if for any $C \in \mathcal C$, we have $C \subseteq X$ or $C \cap X = \emptyset$.
The structure $\cM$ has the strong cell decomposition property
if for any positive integers $m$, $k$ and any definable sets $X_1, \ldots, X_k \subseteq M^m$,
there exists a decomposition of $M^m$ into finitely many strong cells partitioning each of the sets $X_1, \ldots, X_k$.
Similarly, we can define the refined strong cell decomposition property.
\end{definition}

The following result follows from Lemma~2.6, Corollary~2.16, and Corollary~2.17 of \cite{W1}.

\begin{fact} \label{valuational}
Let $\cM = (M, <, +, \ldots)$ be a weakly o-minimal expansion of an ordered group.
Then the structure $\cM$ is non-valuational if and only if it has the refined strong cell decomposition property.
\end{fact}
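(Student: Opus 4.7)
The plan is to handle the two directions of the biconditional separately. The forward direction, strong cell decomposition $\Rightarrow$ non-valuationality, is the lighter of the two and I would argue contrapositively: a valuational definable cut $\langle C, D \rangle$ with gap $\varepsilon > 0$ obstructs the continuous-extension requirement in the definition of strong cells. Concretely, one produces a definable function jumping across such a cut whose graph cannot be partitioned into strong cells, because the required extension $\ov{f} : \ov{C_0} \to \ov{M}$ along any $1$-strong cell $C_0$ abutting the cut would have to be continuous at $\langle C, D \rangle$, and the $\varepsilon$-gap makes this impossible.

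The reverse direction, non-valuationality $\Rightarrow$ strong cell decomposition, carries essentially all the work and I would prove it by induction on the ambient dimension $m$, mirroring the classical proof of o-minimal cell decomposition but with strong cells in place of ordinary cells. The base case $m = 1$ is immediate from weak o-minimality: a finite convex partition of $M$ refined by singleton boundaries is already a strong cell decomposition. For the inductive step, given definable sets $X_1, \ldots, X_k \subseteq M^{m+1}$, I would analyse their fibres above $\pi : M^{m+1} \to M^m$. Each fibre $(X_j)_a$ is a finite union of convex subsets of $M$, so its endpoints give definable fibre-boundary functions $a \mapsto f_{j,i}(a) \in \ov{M} \cup \{\pm \infty\}$. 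Applying the inductive hypothesis to a family of definable sets on $M^m$ that records the combinatorial behaviour of these boundary functions produces a strong cell decomposition of $M^m$ on each of whose cells $C$ the fibres of all $X_j$ are described by an ordered tuple $f_1 < \cdots < f_r : C \to \ov{M} \cup \{\pm \infty\}$.

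The remaining and genuinely hard step is to refine each base cell $C$ so that every $f_i$ becomes strongly continuous and regular; the graphs and bands then furnish the required strong cells of $M^{m+1}$. To achieve this I would first prove a weakly o-minimal analogue of the monotonicity theorem, partitioning $C$ into pieces on which each $f_i$ is monotonic in each coordinate, and then invoke non-valuationality to guarantee that the unilateral limits of $f_i$ along any approach to a point of $\ov{C}$ exist in $\ov{M}$ rather than falling into a valuational gap. Patching these limits into a single continuous extension $\ov{f_i} : \ov{C} \to \ov{M}$, especially at corners of $\ov{C}$ where several directions of approach must yield the same value, is the most delicate point; regularity can then be arranged by a further refinement separating the values of each $f_i$ according to whether they lie in $M$, in $\ov{M} - M$, or are $\pm \infty$. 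This strong-continuity refinement is where non-valuationality is indispensable: without it, some $f_i$ would simply have no continuous extension and the decomposition would fail.
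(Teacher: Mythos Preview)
The paper does not prove this statement. It is recorded as a \emph{Fact}, and the sentence immediately preceding it reads ``The following result follows from Lemma~2.6 and Corollary~2.16 of \cite{W1}.'' There is therefore nothing in the paper to compare your argument against beyond that bare citation to Wencel.

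Your plan is, in outline, a reasonable reconstruction of how Wencel's argument runs: the contrapositive for one direction (a valuational cut obstructs the continuous extension of boundary functions, so strong cell decomposition fails) and, for the other, an induction on ambient dimension modelled on the classical o-minimal cell decomposition theorem, with non-valuationality invoked precisely to manufacture the extensions $\ov{f_i}:\ov{C}\to\ov{M}$. That is indeed where the weight lies in \cite{W1}. But be aware that what you have written is a strategy rather than a proof: the step you yourself flag as ``the most delicate point''---showing that unilateral limits along different approaches to a boundary point of $\ov{C}$ agree, so that the $\ov{f_i}$ patch into a single continuous function on $\ov{C}$---is asserted, not carried out, and it is exactly the content of the cited results. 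If your aim is to match the paper, a one-line citation to \cite{W1} is all that is required; if your aim is a self-contained proof, substantial work remains and you would essentially be reproducing a portion of Wencel's paper.
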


The following definition appears in \cite{W2}.

\begin{definition}
	Let $\mathcal M = (M, <, \ldots)$ be a weakly o-minimal structure with the refined strong cell decomposition property.
	For any $m \in \N_{+}$ and $i_1, \dots, i_m \in \{0,1\}$, we define basic $\langle i_1, \dots, i_m \rangle$-cells in $\overline{M}^m$ inductively.
	
	If $1 \leq j_1 < \dots < j_k \leq m$, let
	\[
	\rho^m_{j_1, \dots, j_k} : \overline{M}^m \to \overline{M}^k
	\]
	denote the projection onto the coordinates $j_1, \dots, j_k$.
	When $m$ is clear from the context, we simply write $\rho_{j_1, \dots, j_k}$.
	
	\begin{enumerate}
		\item
		A singleton of $\overline{M}^m$ is called a basic $\langle 0, \dots, 0 \rangle$-cell in $\overline{M}^m$.
		
		\item
		If $C \subseteq M$ is a strong $\langle 1 \rangle$-cell, then $\overline{C}$ is called a basic $\langle 1 \rangle$-cell.
		Note that
		\[
		\rho_1(\overline{C}) \cap M = \overline{C} \cap M = C
		\]
		is an open strong cell in $M$.
		
		\item
		If $C = \{ \overline{a} \} \subseteq \overline{M}^m$ and $I$ is a basic $\langle 1 \rangle$-cell in $\overline{M}$,
		then $C \times I$ is called a basic $\langle 0, \dots, 0, 1 \rangle$-cell in $\overline{M}^{m+1}$.
		Note that
		\[
		\rho_{m+1}(C \times I) \cap M = I \cap M
		\]
		is an open strong cell in $M$.
		
		\item[]
		Assume that $i_1, \dots, i_m \in \{0,1\}$ with $i_1 + \dots + i_m > 0$, and suppose that basic
		$\langle i_1, \dots, i_m \rangle$-cells in $\overline{M}^m$ have already been defined.
		Let
		\[
		\{ j_1, \dots, j_k \} = \{ j \in \{1, \dots, m\} : i_j = 1 \},
		\qquad j_1 < \dots < j_k,
		\]
		and suppose that for every basic $\langle i_1, \dots, i_m \rangle$-cell
		$C \subseteq \overline{M}^m$, the set
		\[
		\rho_{j_1, \dots, j_k}(C) \cap M^k
		\]
		is an open strong cell in $M^k$.
		
		\item
		Let $C \subseteq \overline{M}^m$ be a basic $\langle i_1, \dots, i_m \rangle$-cell, and set
		\[
		D := \rho_{j_1, \dots, j_k}(C) \cap M^k,
		\]
		which is an open strong cell in $M^k$.
		If $f$ is a strongly continuous definable function from $D$ to $M$
		or from $D$ to $\overline{M} \setminus M$,
		then
		\[
		\Gamma\bigl(\overline{f} \circ \rho_{j_1, \dots, j_k}\!\restriction_C \bigr)
		\]
		is called a basic $\langle i_1, \dots, i_m, 0 \rangle$-cell in $\overline{M}^{m+1}$.
		Note that
		\[
		\rho_{j_1, \dots, j_k}\!\left(
		\Gamma\bigl(\overline{f} \circ \rho_{j_1, \dots, j_k}\!\restriction_C \bigr)
		\right) \cap M^k = D
		\]
		is an open strong cell in $M^k$.
		
		\item
		Let $C \subseteq \overline{M}^m$ be a basic $\langle i_1, \dots, i_m \rangle$-cell, and set
		\[
		D := \rho_{j_1, \dots, j_k}(C) \cap M^k,
		\]
		an open strong cell in $M^k$.
		If $f, g : D \to \overline{M} \cup \{ \pm \infty \}$ are strongly continuous definable functions such that
		all values of $f$ and $g$ lie in one of the sets
		\[
		\{ -\infty \}, \quad M, \quad \overline{M} \setminus M, \quad \{ +\infty \},
		\]
		and
		\[
		\overline{f}(\overline{x}) < \overline{g}(\overline{x})
		\quad \text{for all } \overline{x} \in \overline{D},
		\]
		then the set
		\[
		(\overline{f} \circ \rho_{j_1, \dots, j_k}, \overline{g} \circ \rho_{j_1, \dots, j_k})_C
		:= \left\{
		\langle \overline{a}, b \rangle \in C \times \overline{M} :
		(\overline{f} \circ \rho_{j_1, \dots, j_k})(\overline{a}) < b <
		(\overline{g} \circ \rho_{j_1, \dots, j_k})(\overline{a})
		\right\}
		\]
		is called a basic $\langle i_1, \dots, i_m, 1 \rangle$-cell in $\overline{M}^{m+1}$.
		Note that
		\[
		\rho_{j_1, \dots, j_k}
		\left[
		(\overline{f} \circ \rho_{j_1, \dots, j_k},
		\overline{g} \circ \rho_{j_1, \dots, j_k})_C
		\right]
		\cap M^{k+1}
		= (f, g)_D
		\]
		is an open strong cell in $M^{k+1}$.
	\end{enumerate}
	
	In the standard way, we define a basic cell decomposition of $\overline{M}^m$ as a finite partition of $\overline{M}^m$ into basic cells.
\end{definition}

For any $n \in \bN_{+}$ and any refined strong cell $C \subseteq M^n$,
we denote by $R_C$ an $n$-ary relational symbol.
We interpret $R_C$ in $\ov{M}^n$ as $\ov{C}$, the completion of $C$.
According to Section 2 of \cite{W2}, 
the structure $\ov{\mathcal M} := (\ov{M}, <, (R_C: C \text{ is a refined strong cell}))$ is o-minimal 
and is called the canonical o-minimal extension of $\cM$.
If $X \subseteq \ov{M}^m$ is  a set definable in $\ov{\cM}$, then $X \cap M^m$ is definable in $\cM$.
If additionally $Y \subseteq M^m$ is definable in $\cM$, then $X \cap Y$ is definable in $\cM$.

The following result follows from Proposition~2.3 of \cite{W2}.

\begin{fact} \label{basic}
	Assume that $\mathcal M = (M, <, \dots)$ is a weakly o-minimal structure with the strong cell decomposition property and $\ov{\mathcal M} = (\ov{M}, <, \dots)$ is its canonical o-minimal extension.
	Let $m, k \in \N_{+}$.
	If $X_1, \dots, X_k \subseteq \ov{M}^m$ are sets definable in $\ov{\M}$,
	then there exists a basic cell decomposition of $\ov{M}^m$ into finitely many basic cells partitioning each of the sets $X_1, \ldots, X_k$.
\end{fact}


\begin{proposition} \label{open}
Suppose that $\mathcal M = (M, <, \ldots)$ is a weakly o-minimal structure with the refined strong cell decomposition property.
Let $n \in \bN_{+}$.
Suppose that  $X$ is a definable open subset of $M^n$.
Then there exists an $\overline{\M}$-definable open set $Y \subseteq \overline{M}^n$ such that
$X = Y \cap M^n$.
\end{proposition}

\begin{proof}
	Take a refined strong cell decomposition of $X$, so that
	\[
	X = C_1 \cup \dots \cup C_k.
	\]
	By Fact~\ref{basic},
	there exists a basic cell decomposition $\mathcal{C}$ of $\overline{M}^n$ partitioning each of the sets $\ov{C_1},\dots, \ov{C_k}$.
	Define
	\[
	\hat{X}
	= \bigcup \left\{ D\in \mathcal{C} : \exists i\ (D \subseteq \overline{C_i}) \right\}
	\;\cup\;
	\bigcup \left\{ D\in \mathcal{C} : \bigl(\forall i\ (D \cap \overline{C_i} = \emptyset)\bigr) \wedge (D \cap M^n = \emptyset) \right\}.
	\]
	Set $Y = \Int_{\overline{M}}(\hat{X})$.
	Since $\mathcal{C}$ is finite, $\hat{X}$ is definable in $\overline{\M}$; hence $Y$ is definable  in $\overline{\M}$ as well.
	We prove that $X = Y \cap M^n$.
	
	By the definition of $\hat{X}$, the inclusion $X \supseteq Y \cap M^n$ is clear.
	
	It remains to show $X \subseteq Y \cap M^n$.
	Assume for a contradiction that there exists $a=(a_1,\dots,a_n)\in X$ such that
	$a \notin \Int_{\overline{M}}(\hat{X})$.
	Since $X$ is open in $M^n$, there exists an open box $U_0 \subseteq X$ in $M^n$ containing $a$.
	Since $a \notin \operatorname{int}_{\overline{M}}(\hat{X})$, there exists a point $b_0 \in \overline{U_0}$ such that $b_0 \notin \hat{X}$.
	Let $U_1$ be an open box in $M^n$ containing $a$ such that
	$\cl_M(U_1) \subseteq U_0$ and $b_0 \notin \overline{U_1}$.
	Then there exists a point $b_1 \in \overline{U_1}$ such that $b_1 \notin \hat{X}$.
	Iterating this argument, there are infinitely many points
	$b_i \in \overline{U_0}$ ($i\in\omega$) with $b_i \notin \hat{X}$.
	Let $B = \{ b_i : i \in \omega \}$.
	Since $\mathcal{C}$ is finite, there exists $C \in \mathcal{C}$ such that infinitely many of the $b_i$ lie in $C$.
	Replacing $B$ by this infinite subset if necessary, we may assume $B \subseteq C$.
	Moreover, since $a \notin \Int_{\overline{M}}(\hat{X})$, for every open box $U \subseteq X$ in $M^n$ containing $a$ there exists $b \in \overline{U}$ with $b \notin \hat{X}$;
	hence we may assume $a \in \cl_{\overline{M}}(C)$.
	Also, $C$ is infinite.
	
	If $C \subseteq \overline{C_i}$ for some $i$, then $B \subseteq \overline{C_i} \subseteq \hat{X}$, a contradiction.
	Thus for every $i$ we have $C \cap \overline{C_i} = \emptyset$, and hence $C \cap X = \emptyset$.
	
	First, consider the case where $C$ is a basic $\langle 0,\dots,0,1 \rangle$-cell.
	Then there exists an open interval $I \subseteq \overline{M}$ such that
	\[
	C = (a_1,\dots,a_{n-1}) \times I.
	\]
	Since $a \in X \subseteq M^n$ and $a \in \cl_{\overline{M}}(C)$, every open box $U$ in $M^n$ containing $a$ satisfies
	$C \cap U \ne \emptyset$, equivalently $(C \cap M^n) \cap U \ne \emptyset$.
	Because $C \cap X = \emptyset$, we have $(C \cap M^n) \cap X = \emptyset$.
	Hence $a \notin \Int_M(X)$, contradicting that $X$ is open in $M^n$.
	
	Next, consider the case where $C$ is a basic $\langle i_1,\dots,i_{n-1},0 \rangle$-cell and $i_1+\dots+i_{n-1} > 0$.
	That is, there exists a basic $\langle i_1,\dots,i_{n-1} \rangle$-cell $\hat{D} \subseteq \overline{M}^{n-1}$, and if we set
	\[
	\{j_1,\ldots,j_k\} = \{\, j \in \{1,\dots,n-1\} : i_j = 1 \,\},
	\qquad j_1 < \dots < j_k,
	\]
	then
	\[
	D = \rho_{j_1,\ldots,j_k}(\hat{D}) \cap M^k
	\]
	is an open strong cell in $M^k$.
	Moreover, there exists a strongly continuous definable function $f$ from $D$ to $M$ or from $D$ to $\overline{M}\setminus M$ such that
	\[
	C = \Gamma\bigl(\overline{f} \circ \rho_{j_1,\ldots,j_k}(\hat{D})\bigr).
	\]
	
	Assume that $f:D\to M$.
	Recall that $C \cap \overline{C_i} = \emptyset$ for every $1 \leq i \leq k$.
	If $C \cap M^n = \emptyset$, $C \subseteq \hat{X}$ by the definition of $\hat{X}$.
	Hence $B \subseteq C \subseteq \hat{X}$, a contradiction.
	
	Assume that $C \cap M^n \ne \emptyset$.
	In the construction of a function-type cell in the basic cells, all cell-defining functions take values in $M$.
	We claim that $a \in \cl_{\overline{M}}(C \cap M^n)$.
	Let $U \subseteq M^n$ be an arbitrary open box containing $a$.
	Since $a \in \cl_{\overline{M}}(C)$, there exists a point $x=(x_1,\dots,x_n) \in \overline{U} \cap C$.
	Set $\tilde{C} = \rho_{j_1,\ldots,j_k}(C)$ and $V = \rho_{j_1,\ldots,j_k}(U)$.
	Then $\tilde{C}$ is open in $\overline{M}^k$, and $V$ is an open box in $M^k$.
	Let $\tilde{x} = (x_{j_1},\dots,x_{j_k})$.
	Then $\tilde{x} \in \overline{V} \cap \tilde{C}$.
	Since $\tilde{C}$ is open in $\overline{M}^k$, there exists an open box $W$ in $M^k$ such that
	$\tilde{x} \in \overline{W} \subseteq \overline{V} \cap \tilde{C}$.
	Choose $\tilde{y} \in W$.
	Since $f:D\to M$, there exists $y \in (C \cap M^n) \cap U$ such that $\rho_{j_1,\ldots,j_k}(y) = \tilde{y}$.
	Therefore $(C \cap M^n) \cap U \ne \emptyset$, and hence $a \in \cl_{\overline{M}}(C \cap M^n)$.
	However, since $C \cap X = \emptyset$, we have $(C \cap M^n) \cap X = \emptyset$, and hence $a \notin \Int_M(X)$.
	This contradicts the assumption that $X$ is open in $M^n$.
	
	Assume instead that $f:D\to \overline{M}\setminus M$.
	Then $C \cap M^n = \emptyset$, and similarly $C \subseteq \hat{X}$.
	Hence $B \subseteq C \subseteq \hat{X}$, a contradiction.
	
	Therefore the function-type case leads to a contradiction.
	
	Finally, consider the case where $C$ is a basic $\langle i_1,\dots,i_{n-1},1 \rangle$-cell and $i_1+\dots+i_{n-1} > 0$.
	As before, there exist a basic $\langle i_1,\dots,i_{n-1} \rangle$-cell $\hat{D} \subseteq \overline{M}^{n-1}$ and an open strong cell $D \subseteq M^k$.
	Moreover, there exist strongly continuous functions $f,g : D \to \overline{M} \cup \{\pm\infty\}$ such that all values of $f$ and $g$ lie in exactly one of
	$\{-\infty\},\ M,\ \overline{M}\setminus M,\ \{\infty\}$, and
	$\overline{f}(x) < \overline{g}(x)$ for any $x \in \overline{D}$, and
	\[
	C = (\overline{f} \circ \rho_{j_1,\ldots,j_k},\ \overline{g} \circ \rho_{j_1,\ldots,j_k})_{\hat{D}}.
	\]
		Recall that $C \cap \overline{C_i} = \emptyset$ for every $1 \leq i \leq k$.
	If $C \cap M^n = \emptyset$, $C \subseteq \hat{X}$ by the definition of $\hat{X}$.
	Hence $B \subseteq C \subseteq \hat{X}$, a contradiction.

	Assume $C \cap M^n \ne \emptyset$.
	As in the previous case, we have $a \in \cl_{\overline{M}}(C \cap M^n)$.
	Hence every open box $U$ in $M^n$ containing $a$ meets $C \cap M^n$.
	Because $C \cap X = \emptyset$, we have $(C \cap M^n) \cap X = \emptyset$, so $a \notin \Int_M(X)$, contradicting openness of $X$ in $M^n$.
	This completes the proof.
\end{proof}



We recall the notion of open cell property from \cite{A}.
An o-minimal structure $\mathcal M = (M, <, \dots)$ is said to have the open cell property
if every non-empty definable open subset of $M^n$ is a union of finitely many open cells.
A weakly o-minimal structure $\mathcal M = (M, <, \dots)$ is said to have the open cell property
if every non-empty definable open subset of $M^n$ is a union of finitely many open strong cells.

The following example shows that the open cell property fails for refined strong cells.

\begin{example}
Let $R_{\mathrm{alg}}$ denote the ordered field of real algebraic numbers 
and let the unary predicate symbol $P$ be interpreted by the convex set $S = (-\pi, \pi) \cap R_{\mathrm{alg}}$.
Then, by Proposition 2.1 of \cite{MMS},
the structure $\mathfrak R = (R_{\mathrm{alg}}, <, +, \cdot, P)$ is non-valuational weakly o-minimal.
So the structure $\mathfrak R$ has the refined strong cell decomposition property.
Consider the functions $f, g : I=(-1, 1) \to \ov{R}_{\mathrm{alg}}$ defined by $f(x) = -4$ and $g(x) = \pi x$.
The set $(f, g)_I$ is an open strong cell but not a refined strong cell
because $g(0) = 0 \in R_{\mathrm{alg}}$ and $g(x) \notin R_{\mathrm{alg}}$
for $x \neq 0$.
Moreover, $(f, g)_I$ cannot be expressed as a finite union of open refined strong cells.
\end{example}


\begin{fact}[{\cite[Theorem~1.3]{Wi}}] \label{Wi}
Suppose that  $\cM = (M, <, +, \cdot, \ldots)$ is an o-minimal expansion of a real closed field.
Let $U$ be a definable bounded open subset of $M^n$.
Then, there exists a finite collection of open  cells in $M^n$ whose union is $U$.
\end{fact}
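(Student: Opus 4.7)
The plan is to argue by downward induction on the dimension of cells, reducing the theorem to a thickening lemma: every definable cell $C$ contained in a definable open set $U \subseteq M^n$ is itself contained in a definable open cell $D \subseteq U$. Granted this lemma, a cell decomposition of $M^n$ partitioning $U$ (which exists because $\cM$ is an o-minimal expansion of a real closed field) writes $U$ as a finite disjoint union $C_1 \cup \cdots \cup C_r$ of cells, and applying the lemma to each $C_i$ yields open cells $D_i$ with $C_i \subseteq D_i \subseteq U$, so that $U = D_1 \cup \cdots \cup D_r$ is the desired decomposition.

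For the lemma I would perform a downward induction on $\dim C$. If $\dim C = n$ then $C$ is already an open cell, so take $D = C$. If $\dim C = m < n$, then after permuting coordinates $C$ has the form $\Gamma(\varphi)_B$ (or, more generally, a band $(f,g)_B$) over a base cell $B$ with $\dim B = m$, where $\varphi = (\varphi_1,\ldots,\varphi_{n-m})$ is a continuous definable tuple on $B$. Using openness of $U$ and definable choice in $\cM$, I pick a definable function $\epsilon : B \to M_{>0}$ such that for every $x \in B$ the box of radius $\epsilon(x)$ around $(x,\varphi(x))$ is contained in $U$. By the o-minimal piecewise continuity of definable functions, after partitioning $B$ into finitely many subcells $B = B_1 \cup \cdots \cup B_s$ the function $\epsilon$ is continuous on each $B_j$.

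Over each top-dimensional subcell $B_j$ (so $\dim B_j = m$), the iterated band
\[
\bigl\{(x, y_1,\ldots, y_{n-m}) : x \in B_j,\ |y_i - \varphi_i(x)| < \epsilon(x)\text{ for each } i\bigr\}
\]
is an $n$-dimensional cell of $M^n$, built by $n-m$ successive band operations (the $i$-th using the continuous bounds $\varphi_i - \epsilon$ and $\varphi_i + \epsilon$ lifted trivially to the current base); it lies inside $U$ and covers the part of $C$ sitting over $B_j$. For the lower-dimensional subcells $B_j$ (those with $\dim B_j < m$), the restriction of $C$ over $B_j$ is a cell of dimension strictly less than $m$, so the downward inductive hypothesis supplies open cells of $U$ covering it.

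The main obstacle I expect lies in the bookkeeping of the recursion: each refinement of the base into subcells produces lower-dimensional boundary strata, each of which triggers a further recursive application of the lemma. Because $\dim C$ strictly drops at every recursive invocation and each step produces only finitely many subcells, the process terminates with a finite collection of open cells; but verifying the finiteness carefully, and checking that the tubes over top-dimensional subcells together with the recursively produced open cells over lower-dimensional subcells genuinely cover all of the original cell $C$, is the delicate point.
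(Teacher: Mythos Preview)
The paper does not prove this fact; it is cited from Wilkie and used as a black box in the proof of Theorem~2.13, so there is no argument in the paper to compare yours against. Your sketch does follow Wilkie's high-level strategy (thicken each cell of a decomposition of $U$ to an open cell inside $U$), but it has a real gap beyond the recursive bookkeeping you flag.

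After you permute coordinates so that $C$ becomes the graph of $\varphi : B \to M^{n-m}$, the tube you build is an open cell in the \emph{permuted} coordinate order, and coordinate permutations do not in general send open cells to open cells. Concretely, take $C = \{0\}\times(0,1) \subseteq M^2$ inside the bounded open set $U = \{(x,y): 0<y<1,\ |x| < (y-\tfrac12)^2+\tfrac14\}$: your $\epsilon(y) = (y-\tfrac12)^2+\tfrac14$ is already continuous, so no further partition of $B=(0,1)$ is triggered, and the tube is $U$ itself --- but $U$ is not a cell in the $(x,y)$ order, since for $\tfrac14<|x|<\tfrac12$ the $y$-fibre breaks into two intervals. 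The repair is to thicken $C$ step by step in the \emph{original} coordinate order, replacing each graph step $\Gamma(h_j)_{C_{j-1}}$ by a band $(h_j-\epsilon_j,\, h_j+\epsilon_j)_{D_{j-1}}$; but then each $h_j$, defined only on $C_{j-1}$, must be continuously and definably extended to the already-thickened $D_{j-1}$. That extension step is exactly where the real closed field structure and the boundedness of $U$ are genuinely used (via a definable retraction, or the distance function to $M^n\setminus U$), and your sketch does not address it. A minor further point: your lemma promises a single open cell $D \supseteq C$, whereas your argument actually produces finitely many; the weaker conclusion is all you need, but the lemma should be stated accordingly.
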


\begin{theorem}
Suppose that  $\cM = (M, <, +, \cdot, \ldots)$ is a non-valuational weakly o-minimal expansion of a real closed field.
Let $U$ be a definable bounded open subset of $M^n$.
Then, there exists a finite collection of open strong cells in $M^n$ whose union is $U$.
\end{theorem}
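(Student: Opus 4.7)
The plan is to transfer Wilkie's theorem (Fact~\ref{Wi}) from the canonical o-minimal extension $\ov{\cM}$ back to $\cM$. First, using the refined strong cell decomposition property of $\cM$ (available since $\cM$ is non-valuational), I would take the smallest refined strong cell decomposition $\cC$ of $M^n$ partitioning $U$, so that $U = \bigcup_{i=1}^s C_i$ with each $C_i \in \cC$ contained in $U$. By Remark~\ref{open}, the completion $\ov U = \bigcup_{i=1}^s \ov{C_i}$ is then open in $\ov{M}^n$; it is also bounded, and it is definable in $\ov{\cM}$ via the relational symbols $R_{C_i}$.

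Since $\cM$ expands a real closed field, the canonical o-minimal extension $\ov{\cM}$ is an o-minimal expansion of the real closed field $\ov M$, so Fact~\ref{Wi} applies to $\ov U$ and produces open $\ov{\cM}$-cells $D_1, \dots, D_k \subseteq \ov{M}^n$ with $\ov U = \bigcup_{j=1}^k D_j$. Using that $\ov{C_i} \cap M^n = C_i$, intersecting with $M^n$ gives $U = \bigcup_{j=1}^k (D_j \cap M^n)$, and it therefore suffices to show that each nonempty $D_j \cap M^n$ is an open strong cell in $M^n$.

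I would establish the following stronger statement by induction on $n$: if $D$ is an open $\ov{\cM}$-cell in $\ov{M}^n$ with $D \cap M^n \neq \emptyset$, then $D \cap M^n$ is an open strong cell in $M^n$ with completion equal to $D$. For $n = 1$, a nonempty open interval in $\ov M$ meets $M$ in a nonempty open convex set, which is $\cM$-definable by Proposition~2.3 of \cite{W2} and hence a $1$-strong cell; its completion recovers the original interval by density of $M$ in $\ov M$. For $n \geq 2$, write $D = (\hat f, \hat g)_{D'}$ with $D' \subseteq \ov{M}^{n-1}$ an open $\ov{\cM}$-cell and $\hat f, \hat g : D' \to \ov M \cup \{\pm\infty\}$ continuous and $\ov{\cM}$-definable with $\hat f < \hat g$; by induction $C' = D' \cap M^{n-1}$ is an open strong cell with $\ov{C'} = D'$. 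The restrictions $f = \hat f|_{C'}$ and $g = \hat g|_{C'}$ are $\cM$-definable by Proposition~2.3 of \cite{W2}, continuous, and extend continuously to $\ov{C'} = D'$ via $\hat f$ and $\hat g$ themselves, so $D \cap M^n = (f, g)_{C'}$ is an open strong cell in $M^n$ with completion $D$.

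The main obstacle is the inductive step, specifically verifying that the abstractly defined completion $\ov{C'}$ of the strong cell $C' = D' \cap M^{n-1}$ actually coincides with the given open cell $D'$, and that the restrictions of the boundary functions retain the continuous extension required by the strong cell definition. This forces one to propagate the density of $M$ in $\ov M$ through the recursive construction and to use the compatibility between $\cM$-definability and $\ov{\cM}$-definability furnished by Proposition~2.3 of \cite{W2}.
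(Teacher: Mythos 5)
Your proposal is correct and follows essentially the same route as the paper: pass to the completion $\ov{U}$, which is open, bounded and definable in the canonical o-minimal extension $\ov{\cM}$, apply Wilkie's theorem there, and intersect the resulting open cells with $M^n$. The only difference is that you supply an explicit induction showing that $D_j \cap M^n$ is an open strong cell with completion $D_j$ (using density of $M$ in $\ov{M}$ and Proposition~2.3 of \cite{W2}), a step the paper merely asserts.
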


\begin{proof}
By Fact~\ref{valuational}, the structure $\mathcal M$ has the refined strong cell decomposition property.
Since $U$ is a definable bounded open subset of $M^n$, by Proposition~\ref{open},
there exists an $\overline{\M}$-definable bounded open set $V \subseteq \overline{M}^n$ such that
$U = V \cap M^n$.
By Fact~\ref{Wi}, there exist open cells $D_1, \dots, D_l$ such that
$V = D_1 \cup \dots \cup D_l$.
Because each $D_i \cap M^n$ is an open strong cell and ${U} = (D_1 \cap M^n) \cup \dots \cup (D_l \cap M^n)$,
this completes the proof.
\end{proof}

\section{Weakly o-minimal expansions of ordered groups}

Let $\mathcal M = (M, <, +, \cdots)$ be a non-valuational weakly o-minimal expansion of an ordered group.
If $\M$ is o-minimal, we say that $\mathcal M$ is semi-bounded if
there exists no definable bijection between a bounded interval and an unbounded interval \cite{E}.

\begin{theorem}[\cite{EEP}]
Let $\mathcal M = (M, <, +, \cdots)$ be a semi-bounded o-minimal expansion of an ordered group.
Then every non-empty definable open set is a finite union of open cells.
\end{theorem}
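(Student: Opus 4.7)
The plan is to induct on the ambient dimension $n$. The base case $n=1$ is immediate: any non-empty definable open subset of $M$ is a finite union of open intervals, each of which is an open $1$-cell. For the inductive step, I would begin by fixing a cell decomposition $\mathcal D$ of $M^n$ compatible with $U$, and let $\mathcal D^\circ$ denote the top-dimensional (open) cells of $\mathcal D$ contained in $U$; these already have the desired form. The remaining cells of $\mathcal D$ lying in $U$ are of lower dimension, and the problem reduces to engulfing each such lower-dimensional cell $C \subseteq U$ into an open cell that is still contained in $U$.

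The structural tool specific to the semi-bounded setting is the Cone Decomposition Theorem of Edmundo--Peterzil: every definable set in a semi-bounded o-minimal expansion of an ordered group is a finite disjoint union of cones of the form $B \oplus \Lambda$, where $B$ is a definable bounded set and $\Lambda = \bigoplus_{i=1}^{k} [0,\infty)\, v_i$ is the positive cone of finitely many linearly independent $\emptyset$-definable vectors. I would apply this to $U$ to obtain $U = \bigsqcup_j (B_j \oplus \Lambda_j)$ and, after a definable linear change of coordinates for each cone, realise each $\Lambda_j$ as a positive orthant of a coordinate subspace. Each piece $B_j \oplus \Lambda_j$ then has its unbounded directions trivially open (produced by pure linear algebra) and its bounded directions controlled by $B_j$. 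To handle the bounded factors I would apply the inductive hypothesis in the bounded coordinate subspace: in that subspace the induced structure is still semi-bounded o-minimal, so the inductive hypothesis gives a finite covering of $B_j \cap U$ by open cells, which then extrude to open cells of $M^n$ along the corresponding $\Lambda_j$.

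The main obstacle is that the cone decomposition is disjoint rather than open: individual cones need not be open, and gluing the extruded open cells coming from different cones can introduce new lower-dimensional slivers at the interfaces between cones. To close this gap, I would use definable choice, together with a local Wilkie-type argument on each bounded base $B_j$ (where semi-boundedness restricts the induced structure to a short real-closed-field-like fragment on which Fact~\ref{Wi} applies directly), to produce, uniformly in $C$, a definable family of open boxes covering each residual lower-dimensional boundary piece and contained in $U$, then linearly extend these boxes in the unbounded directions of the relevant cone. Termination is guaranteed by the finiteness of the original cell and cone decompositions, which closes the induction and yields $U$ as a finite union of open cells.
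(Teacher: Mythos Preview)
The paper does not prove this theorem; it is quoted verbatim from \cite{EEP} and used as a black box in the proof of the subsequent result. There is therefore no paper-proof to compare your proposal against.

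As a standalone sketch, your outline has the right architecture---the cone (or ``long cell'') decomposition is indeed the engine behind the argument in \cite{EEP}---but two steps are not yet solid. First, your appeal to Fact~\ref{Wi} on the bounded base $B_j$ is not justified: a semi-bounded structure need not carry a definable real closed field on any bounded interval (it may be linear, e.g.\ an ordered vector space), and in that regime Wilkie's argument, which relies on field operations to build the shrinking open cells, does not apply. The actual argument in \cite{EEP} treats the bounded and linear pieces by separate, more elementary means rather than by invoking Wilkie. Second, your final paragraph acknowledges the real difficulty---that cones in the decomposition are not open and interfaces between cones create residual lower-dimensional pieces---but the proposed fix (``definable family of open boxes \ldots\ linearly extend'') is a description of what one wants, not a mechanism that guarantees the resulting boxes stay inside $U$ and that the process terminates without circularity. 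Until those two points are made precise, the proposal is a plausible plan rather than a proof.
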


\begin{theorem}
Consider a non-valuational weakly o-minimal expansion $\mathcal{M}=(M, <, +, \dots)$ of an ordered group.
Let $U$ be a definable open subset of $M^n$ and assume that $\overline{\M}$ is semi-bounded.
Then there exist finitely many open strong cells whose union is $U$.
\end{theorem}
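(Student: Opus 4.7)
The plan is to follow the strategy of the preceding theorem, with the semi-bounded o-minimal theorem of Edmundo, Eleftheriou and Peterzil (stated just above) playing the role that Fact~\ref{Wi} plays there. The workflow will be: pass to the canonical o-minimal extension $\ov{\mathcal M}$, complete $U$ to an $\ov{\mathcal M}$-definable open set $\ov{U}$, decompose $\ov{U}$ into open cells in $\ov{\mathcal M}$ via EEP, and intersect back with $M^n$.

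First, I would let $\mathcal C$ be the smallest refined strong cell decomposition of $M^n$ partitioning $U$, so that $U = \bigcup_{i=1}^{k} C_i$ and $\ov{U} = \bigcup_{i=1}^{k} \ov{C_i}$ for some $C_1,\ldots,C_k \in \mathcal C$. By Remark~\ref{open}, the completion $\ov{U}$ is open in $\ov{M}^n$, and by construction of the $R_C$ relations it is definable in $\ov{\mathcal M}$. Unlike in the previous theorem, $U$ need no longer be bounded, so I cannot invoke Wilkie's theorem; this is precisely why semi-boundedness is assumed.

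The crux is to verify that $\ov{\mathcal M}$ is itself a semi-bounded o-minimal expansion of an ordered group, so that the EEP theorem applies to $\ov{U}$. Semi-boundedness should transfer from $\mathcal M$ via the standard correspondence between $\ov{\mathcal M}$-definable and $\mathcal M$-definable sets (Proposition 2.3 of \cite{W2}): any $\ov{\mathcal M}$-definable bijection $\ov{h}$ between a bounded interval and an unbounded interval in $\ov{M}$ would, upon restricting the graph to $M^2$ and replacing the $\ov{M}$-endpoints by suitable $\mathcal M$-definable endpoints drawn from $M$, produce an $\mathcal M$-definable bijection between a bounded and an unbounded interval of $M$, contradicting the semi-boundedness of $\mathcal M$. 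This transfer of semi-boundedness from $\mathcal M$ to $\ov{\mathcal M}$ is the step I expect to be most delicate, and it is the main obstacle.

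Once $\ov{\mathcal M}$ is known to be semi-bounded o-minimal, applying the EEP theorem to the $\ov{\mathcal M}$-definable open set $\ov{U}$ yields finitely many open cells $D_1,\ldots,D_l$ of $\ov{\mathcal M}$ with $\ov{U} = D_1 \cup \cdots \cup D_l$. Intersecting with $M^n$ gives $U = (D_1 \cap M^n) \cup \cdots \cup (D_l \cap M^n)$, and each $D_i \cap M^n$ is an open strong cell of $\mathcal M$ by the same argument used in the bounded case of the previous theorem, completing the proof.
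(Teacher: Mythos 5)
Your proposal follows exactly the paper's own argument: complete $U$ to the open $\ov{\mathcal M}$-definable set $\ov{U}$ via the smallest refined strong cell decomposition and Remark~\ref{open}, apply the Edmundo--Eleftheriou--Prelli covering theorem in the canonical o-minimal extension, and intersect the resulting open cells with $M^n$. If anything you are more careful than the paper, which invokes Theorem 3.1 for $\ov{\mathcal M}$ without verifying that $\ov{\mathcal M}$ is a semi-bounded o-minimal expansion of an ordered group --- precisely the transfer step you rightly single out as the delicate point.
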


\begin{proof}
By Proposition~\ref{open}, there exists an $\overline{\M}$-definable open subset $V$ of $\overline{M}^n$ such that 
$U=V \cap M^n$.
By Theorem 3.1, there exist open cells $D_1, \dots, D_l$ such that
$V= D_1 \cup \dots \cup D_l$.
Since each $D_i \cap M^n$ is an open strong cell and ${U} = (D_1 \cap M^n) \cup \dots \cup (D_l \cap M^n)$,
this completes the proof.
\end{proof}

We obtained an improvement of Theorem~3.2
inspired by a private communication with Fujita \cite{F}.

\begin{theorem}
Let $\mathcal{M}=(M, <, +, \dots)$ be a non-valuational weakly o-minimal expansion of an ordered group.
Then the following three conditions are equivalent.

(1)   
For any bounded definable set $I \subseteq M$ and for every $\mathcal{M}$-definable function $f:I \to \overline{M}$,
$f(I)$ is bounded.

(2) The canonical o-minimal extension $\overline{\mathcal{M}}$ of $\mathcal{M}$ is semi-bounded.

(3) Any $\mathcal{M}$-definable open set is a finite union of open strong cells.
\end{theorem}

\begin{proof}
$(1) \Rightarrow (2)$
Assume that (2) does not hold.
Then we have a bounded open interval $J_1$, an unbounded interval $J_2$ and 
an $\overline{\M}$-definable bijection $g:J_1 \to J_2$.
Since $\overline{\M}$ is o-minimal and by the monotonicity theorem,
we may assume $g$ is continuous and strictly monotone.
Set $I=J_1 \cap M$, $f=g|I$.
Then $f$ is $\mathcal M$-definable.
 Since $M$ is dense in $\ov{M}$,
 $f(I)$ is unbounded.
This contradicts (1).

$(2) \Rightarrow (1)$
We show the contradiction.
Suppose that $I$ is a bounded subset of $M$ and $f:I \to \overline{M}$ is a function definable in $\mathcal{M}$.
Assume that $f(I)$ is unbounded.
By Proposition~3.1 of \cite{W2}, 
we can decompose $I$ into $I=X \cup J_1 \cup \dots \cup J_k$ such that 
$X$ is finite, each $J_i$ is open convex set and $f|J_i$ is monotone and strongly continuous.
We have the fact that some $f(J_i)$ is unbounded.
Replacing $f$ by $f|J_i$, 
we can assume that $f$ is  monotone and strongly continuous.
Then the unique extension $\overline{f}:\overline{I} \to \overline{M}$ is monotone and continuous.
This is an $\overline{\M}$-definable bijection between a bounded open interval $\overline{I}$ and an unbounded open interval $\overline{f}(\overline{I})$.
This is a contradiction.

$(2) \Rightarrow (3)$
%
By Theorem 3.2, this implication follows.
%
%
%

$(3) \Rightarrow (2)$
Assume that (2) does not hold.
Then we have an $\overline{\M}$-definable bijection $f:J_1 \to J_2$ between a bounded open interval $J_1$ and an unbounded interval $J_2$.
Since $\overline{\M}$ is o-minimal and by the monotonicity theorem,
we may assume that $f$ is strictly monotone and continuous.
We may further assume that $J_2$ is bounded below and $f$ is strictly decreasing.
By shifting, we can set $J_1=(0, b)$.
Let $Y$ be the $\overline{\M}$-definable open set

$Y=\{(x, y) \in \overline{M}^2 : (-b<x<b) \land ((x<0) \to (y<f(-x))) \land ((x>0) \to (y<f(x)))\}$.

Set $X=Y \cap M^n$.
Then $X$ is $\mathcal{M}$-definable.
We prove that $X$ is not a finite union of open strong cells.

Assume $X$ is a finite union of open strong cells.
Note that $\{ 0 \} \times M$ is contained in $X$.
Thus there exists an open strong cell $C$ and $r \in M$ such that 
$\{0\} \times (r, \infty) \subseteq C \subseteq X$.
Since $\{0\} \times (r,  \infty) \subseteq C$,
we can find a definable open convex set $I$ with $0 \in I$ and a strongly continuous definable function $g:I \to \overline{M} \cup \{-\infty\}$
such that 
$C=\{ (x, y) \in M^2 : x \in I, y>g(x)\}$.
Hence $C$ contains a point $(x_0, y_0)$ with $0<x_0<b$ and $y_0>f(x_0)$.
It contradicts $C \subseteq X$.
\end{proof}


\begin{thebibliography}{99}
%



\bibitem{A} 
S.~Andrews, Definable open sets as finite unions of definable open cells,
Notre Dame J. Form. Log. {\bf 51} (2010), no. 2, 247-251.

\bibitem{C}
M.~Coste, 
An introduction to o-minimal geometry,
Dottorato di Ricerca in Matematica, Dip.\ Mat.\ Univ.\ Pisa, 
Istituti Editoriali e Poligrafici Internazionali (2000).


\bibitem{Di} 
M.~A.~Dickmann, Elimination of quantifiers for ordered valuation rings, 
J. Symb. Log. {\bf 52} (1987), 116-128.

\bibitem{D} 
L.~van den Dries, Tame topology and o-minimal structures, London Mathematical Society Lecture
Notes Series, 248, Cambridge University Press, Cambridge, (1998).

\bibitem{E} 
M.~Edmundo,
Structure theorems for o-minimal expansions of groups,
Ann. Pure Appl. Logic {\bf 102} (2000), no. 1-2, 159-181.


\bibitem{EEP} 
M.~Edmundo, P.~Eleftheriou and L.~Prelli,
Coverings by open cells,
Arch. Math. Logic {\bf 53} (2014), 307-325.


\bibitem{ET} 
J. S.~Eivazloo and S.~Tari,
SCE-cell decomposition and OCP in weakly o-minimal structures,
Notre Dame J. Form. Log. {\bf 57} (2016), 399-410.



\bibitem{F}
M.~Fujita, Private communication, (2025).



\bibitem{MMS} 
D.~Macpherson, D.~Marker and C.~Steinhorn, Weakly o-minimal structures and real closed fields, Trans.
Amer. Math. Soc. \textbf{352} (2000), 5435-5483.






\bibitem{W1} 
R.~Wencel,  Weakly o-minimal non-valuational structures,
Ann. Pure Appl. Logic \textbf{154} (2008), 139-162.


\bibitem{W2} 
R.~Wencel, On the strong cell decomposition property for weakly o-minimal structures, Math. Log.
Quart. \textbf{59} (2013), 452-470.

\bibitem{Wi} 
A.~Wilkie, Covering open definable sets by open cells,
In: M.~Edmundo, D.~Richardson, A.~J.~Wilkie
(eds.) O-minimal Structures, Proceedings of the RAAG Summer School Lisbon 2003, Lecture Notes
in Real Algebraic and Analytic Geometry, Cuvillier Verlag, (2005).


\end{thebibliography}
\end{document}